\newcommand{\be} {\begin{eqnarray}}
\newcommand{\ee} {\end{eqnarray}}
\newcommand{\bep} {\begin{eqnarray*}}
\newcommand{\eep} {\end{eqnarray*}}
\newcommand {\s}{\mathop{\mathcal{S}}\nolimits}
\newcommand {\m}{\mathop{\mathcal{M}}\nolimits}
\newcommand {\Hol}{\mathop{\rm Hol}\nolimits}
\renewcommand {\Re}{\mathop{\rm Re}\nolimits}
\newcommand{\R}{{\mathbb R}}
\newcommand{\B}{{\mathbb B}}
\newcommand{\C}{{\mathbb C}}
\newcommand{\U}{{\mathbb U}}
\newcommand {\D}{\mathbb{D}}
\newtheorem{remar}{Remark}
\newtheorem{examp}{Example}
\newtheorem{defin}{Definition}
\newtheorem{theorem}{Theorem}
\newcommand{\rema}{\begin{remar}\rm}
\newcommand{\erema}{$\blacktriangleright$\end{remar}}
\newcommand{\exa}{\begin{examp}\rm}
\newcommand{\eexa}{$\blacktriangleright$\end{examp}}
\def\lwvec(#1 #2){\linewd 0.1
           \lvec(#1 #2)
           \linewd 0.05}
\begin{document}

\title[Fekete--Szeg\"{o} problem for spirallike mappings]{Note on the Fekete--Szeg\"{o} problem for spirallike mappings in Banach spaces}

\begin{abstract}
In this note we present a remark on the paper ``On the coefficient inequalities for a class of holomorphic mappings associated with
spirallike mappings in several complex variables" by Y.~Lai and Q.~Xu \cite{LX} published recently in the journal {\it Results in Mathematics}.

We show that one of the theorems  in \cite{LX} concerning the finite-dimensional space $\C^n$ is a direct consequence of another one, so it does not need an independent proof.
Moreover, we prove that a sharp norm estimate on the Fekete--Szeg\"{o} functional over spirallike mappings in a general Banach space can be deduced from a result in \cite{LX}.
\end{abstract}

\author[M. Elin]{Mark Elin}

\address{Department of Mathematics,
         Ort Braude College,
         Karmiel 21982,
         Israel}

\email{mark$\_$elin@braude.ac.il}

\author[F. Jacobzon]{Fiana Jacobzon}

\address{Department of Mathematics,
         Ort Braude College,
         Karmiel 21982,
         Israel}

\email{fiana@braude.ac.il}

\keywords{  Fekete--Szeg\"{o} inequality, holomorphic mappings, spirallike
mappings of type $\beta$, sharp coefficient bound}
\subjclass[2020]{Primary 32H02; Secondary 30C45}

\maketitle

We mostly preserve the notations used in \cite{LX} to make the reading more convenient. Let  $\mathbb{U}$ be the open unit disk in the complex plane $\C$ and $\B$ be the open unit ball in a complex Banach space $X$ equipped with norm~$\|\cdot\|$.

Let $X^*$ be the dual space of $X$. For each $x \in X \setminus \{0\}$, we define
\begin{equation}\label{Tx-set}
T(x)=\left\{T_x \in X^*: \|T_x\|=1 \text{ and } T_x(x)=\|x\|\right\}.
\end{equation}
According to the Hahn-Banach theorem (see, for example, Theorem 3.2 in \cite{Rudin}), $T(x)$ is nonempty. 
Let $Y$ be a Banach space (possibly, different from~$X$). Denote by $\Hol(\B,Y)$ the set of all holomorphic mappings from $\B$ into $Y$. It is well known
that if $f \in \Hol(\B,Y)$, then
\begin{equation}\label{Tayl-infinite}
f(y) =\sum_{n=0}^{\infty} \frac{1}{n!} D^nf(x)\left[(y - x)^n\right]
\end{equation}
for all $y$ in some neighborhood of $x\in \B$, where $D^nf(x):\prod_{k=1}^{n}X \to Y$ is the $n^{th}$  Fr\'{e}chet derivative of $f$ at $x$ and
\begin{equation}\label{n-deriv}
D^nf(x)\left[(y - x)^n\right] = D^nf(x)[y - x,\ldots, y- x].
\end{equation}
It is well-known that $D^nf(x)$ is a bounded symmetric $n$-linear operator.

Recall that a holomorphic mapping $f : \B\to X$ is called biholomorphic if the inverse $f^{-1}$ exists and
is holomorphic on the image $f(\B)$.  A mapping $f \in \Hol(\B,X)$ is said to be locally biholomorphic
if for each $x \in \B$ there exists a bounded inverse for the Fr\'{e}chet derivative $Df(x)$.

Further we will use the following definitions.
\begin{defin}[see \cite{GHK2003}]\label{def-sparlike}
Suppose $|\beta|<\frac{\pi}{2}$.  A normalized locally biholomorphic mapping $f : \B\to X$ is called spirallike of type $\beta$ if
\begin{equation}\label{f-sparlike}
\Re \left(e^{-i \beta} T_x\left( \left(Df(x)\right)^{-1}f(x)\right)\right)>0, \quad \forall x \in \B\setminus\{0\} \text{ and } \ T_x \in T(x).
\end{equation}
\end{defin}

Let $\widehat{\s}_\beta(\B)$ be the class of spirallike mappings of type $\beta$ on $\B$. In the case $X = \C$, $\B =\U$, we denote  $\widehat{\s}_\beta:=\widehat{\s}_\beta(\U).$
Moreover, the relation \eqref{f-sparlike} is equivalent to $\Re \left(e^{-i \beta} \frac{zf'(z)}{f(z)}\right)>0$. Therefore, Definition~\ref{def-sparlike} is the standard one for spirallike functions of type $\beta$ on $\U$ (see, for example, \cite{GHK2003}).

\begin{defin}[cf. \cite{TCh2014, XL}]\label{def-Mg-class}
Suppose $|\beta|<\frac{\pi}{2}$. Let $g:\U \to \C$ be a biholomorphic function such
that $g(0)=1$, $\Re g(z)>0$, $z \in \D$, and $g_\beta(z)= \frac{\cos\!\beta g(z)+\!i\sin\beta}{e^{i \beta}}.$ Denote 
\begin{equation}\label{Mg-class}
\widehat{\m}_g(\beta)\!:=\!\left\{h \in \Hol (\B, X)\!:\! \frac{\|x\|}{T_x(h(x))}\! \in g_\beta(\U), x \in \B\!\setminus\{0\}, T_x \in T(x) \!\right\}\!.
\end{equation}
\end{defin}

In the special case when $X = \C$, $\B = \U$,  the class $\widehat{\m}_g(\beta)$ consists of holomorphic functions $h$ that satisfy
\[
\frac{z}{h(z)}\! \in g_\beta(\U).
\]

Note that the class $\widehat{\m}_g(0)$ was introduced in \cite{GHK2002, GKo} and studied in \cite{GHKK2017, GHK2018, HHK2006, HKK2021}.
Furthermore, if one takes $g(z)=\frac{1+z}{1-z}$, $z \in \U$, in Definition~\ref{def-Mg-class}, then the fact
$ \left(Df(x)\right)^{-1}f(x) \in \widehat{\m}_g(\beta)$ is equivalent to $f \in \widehat{\s}_\beta(\B)$.

To proceed, we assume that $g : \U \to\C$ satisfies the conditions of Definition~\ref{def-Mg-class} and the number $\beta$, $-\frac{\pi}{2}<\beta<\frac{\pi}{2}$, is fixed. Define the function $\psi_\beta : \C \to \R^+$ by
\[
\psi_\beta(\lambda):=\cos \beta \cdot \, \max\!\left\{1,\left|(1-2\lambda)\cos \beta g'(0)+\frac{e^{i \beta}}{2}\!\cdot\!\frac{g''(0)}{g'(0)}\right|\!\right\}.
\]

Y. Lai and Q. Xu in \cite{LX} studied so-called mappings of one-dimensional type (that is, such that $F(x)=f(x)x$, where $f\in\Hol(\B,\C)$, see, for example, \cite{ES2004}) and proved the following result:
\begin{theorem}[Theorem 3.2 in \cite{LX}]\label{lem3-2}
Let $g \in \Hol(\U,\C)$ satisfy the conditions of Definition~\ref{def-Mg-class}, $f \in \Hol(\B,\C)$, $f(0)=1$, $F(x)=xf(x)$ and suppose that $\left(DF(x)\right)^{-1}F(x) \in \widehat{M_g}(\beta)$. Then for every $\lambda \in \C$ we have
\begin{equation*}\label{FS-forTx}
 \left|\frac{T_x\left(D^3F(0)(x^3)\right)}{3!\|x\|^3}\!-\!\lambda \left(\frac{T_x\left(D^2F(0)(x^2)\right)}{2!\|x\|^2}\right)^2\right|\leq \frac12\ \psi_\beta(\lambda)|g'(0)|,
\end{equation*}
where $x \in \B\setminus\{0\}$ and $T_x \in T(x).$ The above estimate is sharp.
\end{theorem}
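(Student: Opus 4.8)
The plan is to reduce the Banach-space estimate to the classical one-dimensional Fekete--Szeg\"o inequality by restricting $F$ to the complex line through a fixed direction. Fix $x\in\B\setminus\{0\}$, put $u=x/\|x\|$, and consider the slice function $\phi(\zeta):=f(\zeta u)$, which is holomorphic on $\U$ with $\phi(0)=1$. Writing the homogeneous expansion $f=1+P_1+P_2+\cdots$, where $P_k$ is a $k$-homogeneous polynomial, one gets $F(x)=x+P_1(x)x+P_2(x)x+\cdots$, whence $DF(0)=\Id$, $\tfrac{1}{2!}D^2F(0)(x^2)=P_1(x)\,x$ and $\tfrac{1}{3!}D^3F(0)(x^3)=P_2(x)\,x$. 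Applying $T_x$, using $T_x(x)=\|x\|$ and the homogeneity $P_k(x)=\|x\|^k P_k(u)$, I would obtain
\[
\frac{T_x(D^2F(0)(x^2))}{2!\|x\|^2}=P_1(u)=b_1,
\qquad
\frac{T_x(D^3F(0)(x^3))}{3!\|x\|^3}=P_2(u)=b_2,
\]
where $b_1,b_2$ are the coefficients of $G(\zeta):=\zeta\phi(\zeta)=\zeta+b_1\zeta^2+b_2\zeta^3+\cdots$. Thus the Fekete--Szeg\"o functional equals $|b_2-\lambda b_1^2|$ and is independent of $\|x\|$.

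Next I would verify that $G$ inherits the defining subordination. A direct computation gives $(DF(x))^{-1}F(x)=\tfrac{f(x)}{f(x)+Df(x)[x]}\,x$, which along the slice $x=\zeta u$ becomes $\tfrac{G(\zeta)}{G'(\zeta)}\,u$. Choosing the admissible functional $T_x=\tfrac{\bar\zeta}{|\zeta|}T_u\in T(\zeta u)$ and using $T_u(u)=1$, the hypothesis $(DF(x))^{-1}F(x)\in\widehat{\m}_g(\beta)$ yields
\[
g_\beta(\U)\ni\frac{\|x\|}{T_x\big((DF(x))^{-1}F(x)\big)}=\frac{\zeta G'(\zeta)}{G(\zeta)},\qquad \zeta\in\U.
\]
Hence $\tfrac{\zeta G'(\zeta)}{G(\zeta)}$ is subordinate to $g_\beta$, so there is a Schwarz function $\omega(\zeta)=c_1\zeta+c_2\zeta^2+\cdots$ with $\tfrac{\zeta G'(\zeta)}{G(\zeta)}=g_\beta(\omega(\zeta))$.

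It then remains to carry out the standard scalar computation. Expanding both sides and comparing coefficients (with $g_\beta'(0)=\tfrac{\cos\beta\,g'(0)}{e^{i\beta}}$ and $g_\beta''(0)=\tfrac{\cos\beta\,g''(0)}{e^{i\beta}}$) expresses $b_1,b_2$ through $c_1,c_2,g'(0),g''(0)$; substituting and factoring, I expect to reach
\[
b_2-\lambda b_1^2=\frac{\cos\beta\,g'(0)}{2e^{i\beta}}\Big(c_2+\mu\,c_1^2\Big),
\qquad
e^{i\beta}\mu=(1-2\lambda)\cos\beta\,g'(0)+\frac{e^{i\beta}}{2}\cdot\frac{g''(0)}{g'(0)}.
\]
Since $|e^{i\beta}|=1$, the modulus $|\mu|$ is precisely the quantity inside the maximum defining $\psi_\beta(\lambda)$. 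Applying the sharp Schwarz-coefficient inequality $|c_2+\mu c_1^2|\le\max\{1,|\mu|\}$ (immediate from $|c_1|\le1$ and $|c_2|\le 1-|c_1|^2$) gives $|b_2-\lambda b_1^2|\le\tfrac{\cos\beta\,|g'(0)|}{2}\max\{1,|\mu|\}=\tfrac12\psi_\beta(\lambda)|g'(0)|$, as claimed. Sharpness would follow by lifting the extremal choices $\omega(\zeta)=\zeta^2$ (when $|\mu|\le1$) and $\omega(\zeta)=\zeta$ (when $|\mu|\ge1$) back to the corresponding mappings $F$.

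The last paragraph is routine coefficient bookkeeping; the genuine content is the reduction in the first two paragraphs, i.e.\ showing that restricting to the complex line $\zeta\mapsto\zeta u$ converts the Banach-space membership $(DF(x))^{-1}F(x)\in\widehat{\m}_g(\beta)$ into the scalar subordination for $G$. The delicate point there is the correct handling of the supporting functionals — in particular choosing $T_{\zeta u}=\tfrac{\bar\zeta}{|\zeta|}T_u$ so that $T_x(x)=\|x\|$ holds for every $\zeta$ — together with checking that the apparent singularity of $\tfrac{\zeta G'(\zeta)}{G(\zeta)}$ at the origin is removable with value $g_\beta(0)=1$, so that the subordination is valid on all of $\U$.
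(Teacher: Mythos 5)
Your proof is correct, but it cannot be compared with ``the paper's own proof'' because there is none: this statement is Theorem~3.2 of \cite{LX}, which the note quotes as a black box, and the note's actual content is the reverse implication, deducing the norm estimate of Theorem~\ref{th3-3} from it. Judged independently, your slicing argument works. The formula $(DF(x))^{-1}F(x)=\frac{f(x)}{f(x)+Df(x)[x]}\,x$ is correct (invertibility of $DF(x)$ rules out $f(x)+Df(x)[x]=0$, since that would give $DF(x)[x]=0$); the functional $T_{\zeta u}=\frac{\bar\zeta}{|\zeta|}\,T_u$ does belong to $T(\zeta u)$, and since the definition of $\widehat{\m}_g(\beta)$ quantifies over all supporting functionals you are free to test against this particular one, which yields $\zeta G'(\zeta)/G(\zeta)\in g_\beta(\U)$; univalence of $g_\beta$, the normalization $g_\beta(0)=1$, and removability of the singularity at the origin upgrade this membership to genuine subordination; and your coefficient bookkeeping, including $e^{i\beta}\mu=(1-2\lambda)\cos\beta\,g'(0)+\frac{e^{i\beta}}{2}\cdot\frac{g''(0)}{g'(0)}$ and the elementary sharp bound $|c_2+\mu c_1^2|\le\max\{1,|\mu|\}$, checks out. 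Two observations. First, your reduction identities $\frac{1}{2!}D^2F(0)(x^2)=Df(0)[x]\,x$ and $\frac{1}{3!}D^3F(0)(x^3)=\frac{1}{2}D^2f(0)[x^2]\,x$ are precisely the paper's formulas \eqref{DF20}(iii) and \eqref{DF30}, which the paper exploits in the opposite direction; so your argument and the paper's proof of Theorem~\ref{th3-3} share the same computational core, and together they make explicit that everything reduces to the scalar Fekete--Szeg\"o inequality for functions subordinate to $g_\beta$. Second, sharpness is the only place where you are too brief: you should exhibit the lift explicitly, e.g.\ fix a unit vector $u$ and $T_u\in T(u)$, let $G$ solve $\zeta G'(\zeta)/G(\zeta)=g_\beta(\omega(\zeta))$ for the extremal Schwarz function $\omega$, and set $f(x)=\phi(T_u(x))$ with $\phi(\zeta)=G(\zeta)/\zeta$, $\phi(0)=1$; then $(DF(x))^{-1}F(x)=\frac{G(\zeta)}{\zeta G'(\zeta)}\,x$ with $\zeta=T_u(x)$ is of one-dimensional type, so the defining quantity of $\widehat{\m}_g(\beta)$ is independent of the choice of $T_x$ and lies in $g_\beta(\U)$ because $|T_u(x)|\le\|x\|<1$, and equality in the estimate is attained along $x=ru$. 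With that detail written out, your proof is complete and self-contained.
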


Further, the authors of \cite{LX}  prove independently Theorem~3.3 that asserts that in the case where $X=\C^n$  and is equipped with the norm $\|x\|:=\max\limits_{1\le j\le n}\left| x_j \right|$, the hypotheses of Theorem~\ref{lem3-2} imply that inequality~\eqref{res-thm2} below holds and is sharp.  We claim that the mentioned Theorem~3.3 in \cite{LX} is a direct consequence of Theorem~\ref{lem3-2}. Moreover, we show that for any Banach space $X$ the following assertion holds.

 \begin{theorem}\label{th3-3}
 Let $g \in \Hol(\U,\C)$ satisfy the conditions of Definition~\ref{def-Mg-class}, and mapping $F\in\Hol(\B,X)$ be of one-dimensional type such that $F(0)=0$, $DF(0)=I$ and $\left(DF(x)\right)^{-1}F(x) \in \widehat{M_g}(\beta)$. Then for every $\lambda \in \C$,
 \begin{equation}\label{res-thm2}
\left\|\frac{D^3F(0)(x^3)}{3!}\!-\!\lambda \frac12 D^2F(0)\!\!\left(\!x,\frac{D^2F(0)(x^2)}{2!}\!\right)\right\|\!\leq\!\frac{\|x\|^3}{2} \psi_\beta(\lambda)|g'(0)|,
\end{equation}
where $x \in \B \setminus \{0\}.$ The above estimate is sharp.
\end{theorem}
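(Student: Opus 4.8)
The plan is to reduce the vector-valued inequality \eqref{res-thm2} to the scalar estimate of Theorem~\ref{lem3-2}, taking advantage of the one-dimensional structure of $F$. Write $F(x)=f(x)x$ with $f\in\Hol(\B,\C)$; the normalization $F(0)=0$, $DF(0)=I$ forces $f(0)=1$, so that $f$ satisfies exactly the hypotheses of Theorem~\ref{lem3-2} (the assumption $(DF(x))^{-1}F(x)\in\widehat{M_g}(\beta)$ being common to both statements).

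First I would record the low-order Fr\'{e}chet derivatives of $F$ at the origin. Differentiating the product $F(x)=f(x)x$ by the Leibniz rule gives $DF(x)[h]=Df(x)[h]\,x+f(x)h$ and
\[
D^2F(x)[h,k]=D^2f(x)[h,k]\,x+Df(x)[h]\,k+Df(x)[k]\,h,
\]
whence at $x=0$ one has $\frac{1}{2!}D^2F(0)(x^2)=Df(0)[x]\,x$, $\frac{1}{3!}D^3F(0)(x^3)=\frac12 D^2f(0)[x^2]\,x$, and $D^2F(0)[h,k]=Df(0)[h]\,k+Df(0)[k]\,h$. Inserting $v:=\frac{1}{2!}D^2F(0)(x^2)=Df(0)[x]\,x$ into the last bilinear form yields $\frac12 D^2F(0)(x,v)=(Df(0)[x])^2\,x$.

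The decisive point is that \emph{both} terms on the left-hand side of \eqref{res-thm2} are scalar multiples of the single vector $x$. Hence that whole expression equals $c\,x$ with $c=\frac12 D^2f(0)[x^2]-\lambda(Df(0)[x])^2$, and for any vector of the form $c\,x$ one has $\|c\,x\|=|c|\,\|x\|=|T_x(c\,x)|$, since $T_x(x)=\|x\|$ and $\|T_x\|=1$. Thus the Banach-space norm collapses to an absolute value already controlled by Theorem~\ref{lem3-2}: using $T_x(D^2F(0)(x^2))=2Df(0)[x]\|x\|$ and $T_x(D^3F(0)(x^3))=3D^2f(0)[x^2]\|x\|$, a direct comparison shows that $T_x$ applied to the bracketed vector equals precisely $\|x\|^3$ times the scalar functional bounded in Theorem~\ref{lem3-2}. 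Invoking that bound delivers \eqref{res-thm2} immediately.

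Sharpness is then inherited without extra work: the inequality $\|c\,x\|\le\frac{\|x\|^3}{2}\psi_\beta(\lambda)|g'(0)|$ is an equality exactly when the scalar estimate of Theorem~\ref{lem3-2} is tight, so any extremal scalar function there gives, through $F(x)=f(x)x$, an extremal one-dimensional type mapping for \eqref{res-thm2}. I expect no genuine obstacle beyond the careful bookkeeping of the Fr\'{e}chet derivatives; the only delicate step is the correct evaluation of the composite second term $\frac12 D^2F(0)(x,\cdot)$ and the observation that one-dimensional type forces collinearity with $x$, which is precisely what makes the passage from $\C$ to a general Banach space lossless.
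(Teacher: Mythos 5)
Your proposal is correct and follows essentially the same route as the paper: write $F(x)=f(x)x$, compute the Fr\'{e}chet derivatives of $F$ at the origin via the Leibniz rule, observe that the Fekete--Szeg\"{o} expression collapses to the scalar multiple $\left(\frac12 D^2f(0)[x^2]-\lambda(Df(0)[x])^2\right)x$ of $x$, and identify its norm with $\|x\|^3$ times the functional bounded (sharply) in Theorem~\ref{lem3-2}. In fact your bookkeeping is slightly cleaner than the paper's, whose final display contains a typo ($2!\|x\|$ in place of $2!\|x\|^2$ in the denominator); your version is the one consistent with the statement of Theorem~\ref{lem3-2}.
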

\begin{proof} Since $F$ is of one-dimensional type and $F(0)=0,\ DF(0)=I$, it can be represented as $F(x)=xf(x)$, where $f \in H(\B,\C)$ with $f(0)=1.$

In order to find the first Fr\'{e}chet derivative we calculate
\begin{eqnarray*}\label{DF}
  F(z+w)-F(z) &=& f(z+w)(z+w)-f(z)z \\
  &=& (f(z+w)-f(z))z+f(z+w)w \\
  &=&Df(z)[w]z+ f(z)w +o(\|w\|).
\end{eqnarray*}
This implies
\begin{eqnarray}\label{DF0}
\begin{array}{l}
(i) \ \ \ DF(z)[w]=Df(z)[w]z+ f(z)w,  \\
(ii) \ \  DF(0)[x]=f(0)x,  \\
(iii) \ T_x(DF(0)(x))=T_x(f(0)x)=f(0)\|x\|.
\end{array}
\end{eqnarray}
Turn now to the second Fr\'{e}chet derivative:
\begin{eqnarray*}\label{D2F}
&&DF(z+w_1)[w_2]-DF(z)[w_2]\\
&=& Df(z+w_1)[w_2](z+w_1)+f(z+w_1)w_2-Df(z)[w_2]z -f(z)w_2 \\
&=&(Df(z+w_1)-Df(z))[w_2]z+Df(z+w_1)[w_2]w_1+ (f(z+w_1)-f(z))w_2\\
&=&D^2f(z)[w_1,w_2]z+Df(z)[w_2]w_1+Df(z)[w_1]w_2 +o(\left\|w_1\right\|\cdot\left\|w_2\right\|).
\end{eqnarray*}
This leads to
\begin{eqnarray}\label{DF20}
\begin{array}{l}
(i)\ \ \ D^2F(z)[w_1,w_2]=D^2f(z)[w_1,w_2]z+Df(z)[w_2]w_1+Df(z)[w_1]w_2,\\
(ii) \ \  D^2F(0)[w_1,w_2]=Df(0)[w_2]w_1+Df(0)[w_1]w_2,  \\
(iii) \  D^2F(0)[x^2]=2Df(0)[x]x,\\
(iv) \ T_x(D^2F(0)[x^2])=2Df(0)[x]T_x(x)=2Df(0)[x]\|x\|.
\end{array}
\end{eqnarray}

Using these calculations we can similarly find the third  Fr\'{e}chet derivative and get:
\begin{equation}\label{DF30}
 D^3\!F(0)\left[x^3\right]\!=\!3D^2\!f(0)\left[x^2\right]x\ \text{ and }\ T_x(D^3\!F(0)[x^3])\!=\!3D^2\!f(0)[x^2]\|x\|.
\end{equation}

We now apply formulae \eqref{DF20}(iii) and \eqref{DF30} to compute the analog of the Fekete--Szeg\"{o} functional for $F$:
\begin{eqnarray*}\label{Thm3-3inRIMA-calc}
&&\frac{D^3F(0)[x^3]}{3!}-\lambda \frac12 D^2F(0)\left[x,\frac{D^2F(0)(x^2)}{2!}\right]\\
&=& \frac{D^2f(0)[x^2]}{2!}x-\lambda \frac12 D^2F(0)\left[x,Df(0)(x)x\right]\\
&=& \frac{D^2f(0)[x^2]}{2!}x-\lambda \frac12 \left(Df(0)[Df(0)[x]x]x+Df(0)[x]Df(0)[x]x\right)\\
&=&\!\!\frac{D^2f(0)[x^2]}{2!}x-\! \lambda  Df(0)[x] Df(0)[x]x\\
&=& \left(\!\frac{D^2f(0)[x^2]}{2!}-\lambda  \left(Df(0)[x]\right)^2\! \right)\!x.
\end{eqnarray*}
Thus this is a mapping of one-dimensional type. Combining formulae \eqref{DF0}--\eqref{DF30} we have
\begin{eqnarray*}\label{Thm3-3inRIMA-new}
&&\left\|\frac{D^3F(0)[x^3]}{3!}-\lambda \frac12 D^2F(0)\left[x,\frac{D^2F(0)[x^2]}{2!}\right]\right\|\\
&=& \left|\frac{D^2f(0)[x^2]}{2!}-\lambda  \left(Df(0)[x]\right)^2 \right|\cdot \|x\|\\
&=& \|x\|^3 \left|\frac{T_x\left(D^3F(0)[x^3]\right)}{3!\|x\|^3}-\lambda \left(\frac{T_x(D^2F(0)[x^2])}{2!\|x\|}\right)^2 \right|.
\end{eqnarray*}
This equality together with Theorem~\ref{lem3-2} implies that estimate \eqref{res-thm2} holds and is sharp. 
\end{proof}


\begin{thebibliography}{950}
\bibitem{TCh2014} T. Chiril\u{a}, Subclasses of biholomorphic mappings associated
with $g$-Loewner chains on the unit ball in $\C^n$, {\it Complex Var. Ell. Equ.} {\bf 59},  1456--1474 (2014).
http://dx.doi.org/10.1080/17476933.2013.856422


\bibitem{ES2004} M. Elin and D. Shoikhet, Semigroups with boundary fixed points on the unit Hilbert ball and spirallike mappings, {\it Geom Funct. Theory Several Complex Var.},  82--117 (2004).


\bibitem{GHK2002} I. Graham, H. Hamada and G. Kohr, Parametric representation of univalent mappings in several complex variables, {\it Canadian J. Math.} {\bf 54}, 324--351 (2002).

\bibitem{GHK2003} I. Graham and G. Kohr, {\sl Geometric Function Theory in One and Higher Dimensions}, Marcel Dekker, New York (2003).

\bibitem{GHKK2017} I. Graham, H. Hamada, G. Kohr, M. Kohr, Bounded support points for mappings with $g$-parametric representation in $\C^2$, {\it J. Math. Anal. Appl.} {\bf
454}, 1085--1105 (2017).

\bibitem{GHK2018} I. Graham, H. Hamada, G. Kohr,  Extremal problems for mappings with $g$-parametric representation on the unit polydisc in $\C^n$, {\it Complex Anal. Dyn. Syst.,
Trends in Math.} 141--167 (2018).

\bibitem{HHK2006} H. Hamada, T. Honda, G. Kohr, Growth theorems and coefficient bounds for univalent holomorphic mappings which have parametric representation, {\it J. Math. Anal. Appl.} {\bf 317}, 302--319 (2006).

\bibitem{HKK2021}  H. Hamada, G. Kohr, M. Kohr, The Fekete--Szeg\"o problem for starlike mappings and nonlinear resolvents of the Carath\'eodory family on the unit balls of complex Banach spaces, {\it Anal. Math. Phys.}, {\bf 11}, 115 (2021), https://doi.org/10.1007/s13324-021-00557-6

\bibitem {GKo} G. Kohr,  On some best bounds for coefficients of several subclasses of biholomorphic
mappings in $\C^n$, {\it Complex Var. Ell. Equ.} {\bf 36}, 261--284 (1998).

\bibitem{LX}   Y. Lai and Q. Xu, On the coeffcient inequalities for a class of holomorphic mappings associated with spirallike mappings in several complex variables, {\it Results Math.} {\bf 76}, 191 (2021). https://doi.org/10.1007/s00025-021-01500-8

\bibitem{Rudin} W. Rudin, {\sl Functional Analysis }, Int. Ser. in Pure and Appl. Math. {\bf 8} NY: McGraw-Hill, (1991).

\bibitem{XL}  Q. H.  Xu and  T. S. Liu, The study for estimation of homogeneous expansion of subclasses of
biholomorphic mappings by a unifed method, {\it Acta Math. Sin. (Chin. Ser.)} {\bf 52} (2009), 1189--1198.
\end{thebibliography}
\end{document}